\def\rightharpoonfill@{\arrowfill@\relbar\relbar\rightharpoonup}
\DeclareRobustCommand{\overrightharpoon}{\mathpalette{\underarrow@\rightharpoonfill@}}
\begin{document}
\newcommand{\beq}{\begin{equation}}
\newcommand{\eneq}{\end{equation}}
\newtheorem{thm}{Theorem}[section]
\newtheorem{coro}[thm]{Corollary}
\newtheorem{lem}[thm]{Lemma}
\newtheorem{prop}[thm]{Proposition}
\newtheorem{defi}[thm]{Definition}
\newtheorem{rem}[thm]{Remark}
\newtheorem{cl}[thm]{Claim}
\title{Analytic solutions for the approximated 1-D Kantorovich mass transfer problems}
\author{Xiaojun Lu$^{1}$\ \ \ \ Xiaofen Lv$^2$}
\pagestyle{fancy}                   
\lhead{X. Lu and X. Lv}
\rhead{Monge-Kantorovich mass transfer problem} 
\thanks{Corresponding author: Xiaojun Lu, Department of Mathematics \& Jiangsu Key Laboratory of
Engineering Mechanics, Southeast University, 210096, Nanjing, China}
\thanks{Email addresses:  lvxiaojun1119@hotmail.de(Xiaojun Lu), lvxiaofen0101@hotmail.com(Xiaofen Lv)}
\thanks{Keywords: Monge-Kantorovich mass transfer, singular variational
problem, canonical duality theory}
\thanks{Mathematics Subject Classification: 35J20, 35J60, 49K20,
80A20}
\date{}
\maketitle
\begin{center}
1. Department of Mathematics \& Jiangsu Key Laboratory of
Engineering Mechanics, Southeast University, 210096, Nanjing, China\\
2. Jiangsu Testing Center for Quality of Construction Engineering
Co., Ltd, 210028, Nanjing, China
\end{center}
\begin{abstract}
This paper mainly investigates the approximation of a global
maximizer of the 1-D Monge-Kantorovich mass transfer problem through
the approach of nonlinear differential equations with Dirichlet
boundary. Using an approximation mechanism, the primal maximization
problem can be transformed into a sequence of minimization problems.
By applying the canonical duality theory, one is able to derive a
sequence of analytic solutions for the minimization problems. In the
final analysis, the convergence of the sequence to a global
maximizer of the primal Monge-Kantorovich problem will be
demonstrated.
\end{abstract}
\section{Introduction}
Mass transfer is the net movement of mass from one location to
another by the action of driving forces, such as pressure
gradient(pressure diffusion), temperature gradient(thermal
diffusion), etc. In our physical world, when a system contains more
components with various concentration from point to point, a natural
tendency for mass to be transferred occurred in order to minimize
any concentration difference within the system. This transfer
phenomenon is governed by Fick's First Law. The original transfer
problem, which was proposed by Monge \cite{Monge}, investigated how
to move one mass distribution to
another one with the least amount of work by searching for a mapping ${\bf s}$ to
minimize the cost functional $$C[{\bf r}]:=\int_{\Omega}|x-{\bf r}(x)|d\mu^+(x)$$\\
among the 1-1 mappings ${\bf r}:\Omega\to\Omega^*$ that push forward
$\mu^+$ into $\mu^-$, where both $\Omega$ and $\Omega^*$ are bounded
domains in $\mathbb{R}^n$, $\mu^+$ and $\mu^-$ are two nonnegative
Radon measures on $\Omega$ and $\Omega^*$, respectively.\\

In the 1940s, Kantorovich initiated a duality theory by relaxing
Monge's transfer problem to the task of finding a global
maximizer(so-called {\it Kantorovich potential}) for the Kantorovich
problem in the following form \cite{K1,K2}, \beq(\mathscr{P}):
\displaystyle\max_{u}\Big\{K[u]:=\int_{U}ufdx\Big\},\eneq where
$U=\Omega\cup\Omega^*$, $f:=f^+-f^-$, $f^+\in C(\overline{\Omega})$
and $f^-\in C(\overline{\Omega^*})$ are two nonnegative density
functions and satisfy the normalized balance condition
\[
\int_\Omega f^+dx=\int_{\Omega^*}f^-dx=1.
\]
$u$ is subject to the following constraints, \beq u\in
W_0^{1,\infty}(U)\cap C(\overline{U}), \eneq \beq u=0\ \text{on}\
\overline{\Omega\cap\Omega^*},\eneq \beq\|
u_x\|_{L^\infty(U)}\leq1.\eneq In particular, when
$\Omega\cap\Omega^*=\emptyset$, $C(\overline{U})$ represents
$C(\overline{\Omega})$ and $C(\overline{\Omega^*})$, respectively.\\

As a matter of fact, the Kantorovich problem may not be a perfect
dual to the Monge problem unless a so-called {\it dual criteria for
optimality} is satisfied \cite{Ca1,Evans1}. Nowadays, the
Monge-Kantorovich mass transfer model is widely used in the
diffusive and convective transport of chemical species, purification
of blood in the kidneys and livers, separation of chemical
components in distillation columns, controlling haze in the
atmosphere by artificial precipitation, etc. Interested readers can
refer to \cite{LA1,LA2,Evans1,K1,K2,Monge,Su} for more
important applications. \\

Indeed, many mathematical tools have been developed for the
infinite-dimensional linear programming \cite{R,Su,V}, etc. In this
paper, we consider the 1-D Monge-Kantorovich problem (1) through a
nonlinear differential equation approach by introducing a sequence
of approximation problems for the primal problem $(\mathscr{P})$,
\begin{equation}(\mathscr{P}^{(k)}):
\displaystyle\min_{w_k}\Big\{I^{(k)}[w_k]:=\int_U L^{(k)}(
w_{k,x},w_k,x)dx=\int_U \Big(H^{(k)}(w_{k,x})-w_kf\Big)dx\Big\},
\end{equation}
where $w_{k,x}$ is the weak derivative of $w_k$ with respect to $x$,
$H^{(k)}:\mathbb{R}\to\mathbb{R}^+$ is defined as
$$H^{(k)}(x):={\rm e}^{k(x^2-1)/2}/k.$$
In particular, $I^{(k)}$ is called the {\it potential energy
functional} and is weakly lower semicontinuous on
$W^{1,\infty}_0(U)$. Moreover,
$L^{(k)}(P,z,x):\mathbb{R}\times\mathbb{R}\times U\to \mathbb{R}$
satisfies the following coercivity inequality and is convex in the
variable $P$, $$ L^{(k)}(P,z,x)\geq p_{k}P^2-q_k,\ P\in\mathbb{R},
z\in\mathbb{R}, x\in U, $$ for certain constants $p_k$ and $q_k$.
Notice that when $|x|\leq1$, then
$\displaystyle\lim_{k\to\infty}H^{(k)}(x)=0$ uniformly. All these
facts assures the existence of the global minimizer for
$\mathscr{P}^{(k)}$ \cite{Evans4}. Once such a sequence of global
minimizers $\{\bar{u}_k\}_k$ is obtained, then it will help find a
global Kantorovich potential which solves the primal problem
$(\mathscr{P})$, as is explained in
Theorem 1.2.\\

The key mission of this paper is to obtain an explicit
representation of this approximation sequence $\{\bar{u}_k\}_k$. By
variational calculus, one derives a correspondingly sequence of
Euler-Lagrange equations for $(\mathscr{P}^{(k)})$, namely, for any
$k\in\mathbb{N}$, \beq
\begin{array}{ll}\displaystyle ({\rm e}^{k(u_{k,x}^2-1)/2}u_{k,x})_x+f=0,& \ \text{\rm in}\
U\setminus\{\overline{\Omega\cap\Omega^*}\},
\end{array}\eneq
equipped with the Dirichlet boundary condition. The term ${\rm
e}^{k(\nabla u_k^2-1)/2}$ is called the {\it transport density}.
Actually, (6) is a highly nonlinear differential equation which is
difficult to solve by the direct approach \cite{JH,Evans4,LIONS}.
However, by the canonical duality theory, one is able to demonstrate
the existence and uniqueness of the solution for the nonlinear
differential equation, which establishes the equivalence between the
global minimizer of ($\mathscr{P}^{(k)}$)
and the solution of Euler-Lagrange equation (6).\\

In the former literature, as far as we know, few authors considered
the analytic Kantorovich potential of the approximation problems
($\mathscr{P}^{(k)}$), either a-priori estimates or numerical
approach \cite{Evans1,Evans2,Evans3,R}. The purpose of this paper is
to investigate the analytic solutions for the optimization problems
($\mathscr{P}^{(k)}$) through {\it canonical duality theory}
introduced by G. Strang et al. \cite{G1}. These methods were
originally proposed to find global minimizers for a non-convex
strain energy functional with a double-well potential, which was a
very challenging nonconvex problem.\\

At the moment, we would like to introduce the main theorems.
\begin{thm}
For any positive density functions $f^+\in C(\overline{\Omega})$ and
$f^-\in C(\overline{\Omega^*})$ satisfying the normalized balance
condition, there exists a sequence of solutions $\{\bar{u}_k\}_k$
subject to the constraints (2)-(4) for the Euler-Lagrange equations
(6), which is at the same time a sequence of global minimizers for
the approximation problems ($\mathscr{P}^{(k)}$). In particular,
when $\Omega=(a,b)$, $\Omega^*=(c,d)$,
$\Omega\cap\Omega^*=\emptyset$, then $\{\bar{u}_k\}_k$ can be
represented explicitly as
\[
\bar{u}_k(x)= \left\{
\begin{array}{lll}
\displaystyle\int^{x}_{a}(-F^+(t)+C_k)/E_k^{-1}((-F^+(t)+C_k)^2)dt,&
x\in[a,b],\\
\\
\displaystyle\int^{x}_{c}(F^-(t)-D_k)/E_k^{-1}((F^-(t)-D_k)^2)dt,&
x\in[c,d],
\end{array}
\right.
\]
where $E_k$, $F$ and $G$ are defined as
\[
\left\{
\begin{array}{lll}
E_k(x):=\displaystyle x^2\ln({\rm e}x^{2/k}),&
x\in(0,1],\\
\\
F^+(x):=\displaystyle\int^{x}_{a}f^+(t)dt,&
x\in[a,b],\\
\\
F^-(x):=\displaystyle\int^{x}_{c}f^-(t)dt,& x\in[c,d],\\
\end{array}
\right.
\]
$E_k^{-1}$ stands for the inverse of $E_k$, both $\{C_k\}_k$ and
$\{D_k\}_k$ are number sequences in $(0,1)$.
\end{thm}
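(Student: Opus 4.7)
My plan is to combine the direct method with canonical duality theory, then exploit one-dimensionality to integrate the Euler--Lagrange equation by quadrature.

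Existence of a global minimizer $\bar u_k$ of $(\mathscr{P}^{(k)})$ follows from the direct method on $W^{1,2}_0(U)$: the stated coercivity $L^{(k)}\ge p_k P^2-q_k$, weak lower semicontinuity, and strict convexity of $L^{(k)}$ in $P$ (since $H^{(k)\prime\prime}(P)=(1+kP^2){\rm e}^{k(P^2-1)/2}>0$) yield a unique minimizer, and routine variational calculus shows it satisfies the Euler--Lagrange equation (6) with homogeneous Dirichlet data. Because $\Omega\cap\Omega^*=\emptyset$, this ODE decouples into a pair of two-point boundary-value problems, one on $[a,b]$ and one on $[c,d]$.

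On $[a,b]$, a first integration of (6) gives ${\rm e}^{k(u_{k,x}^2-1)/2}u_{k,x}=-F^+(x)+C_k$ for some constant $C_k$. Squaring and taking logarithms converts this into the equivalent implicit relation $E_k(|u_{k,x}|)=(-F^+(x)+C_k)^2$. A direct computation gives $E_k'(\rho)=(2\rho/k)(k+1+2\ln\rho)$, which is positive on $[{\rm e}^{-k/2},1]$, so $E_k$ maps $[{\rm e}^{-k/2},1]$ bijectively onto $[0,1]$; inverting on this range and restoring the correct sign produces
\[
u_{k,x}=\frac{-F^+(x)+C_k}{E_k^{-1}((-F^+(x)+C_k)^2)}.
\]
Integration against $\bar u_k(a)=0$ delivers the stated formula on $[a,b]$; the analogous computation on $[c,d]$ gives the second line. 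Since $E_k(\rho)\le\rho^2$ on $(0,1]$, we automatically obtain $|\bar u_{k,x}|\le 1$, which recovers constraint (4).

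The constants $C_k,D_k$ are fixed by the remaining Dirichlet conditions $\bar u_k(b)=0$ and $\bar u_k(d)=0$. I expect this to be the main technical point: one must show that the map
\[
\Phi_k(C):=\int_a^b\frac{-F^+(t)+C}{E_k^{-1}((-F^+(t)+C)^2)}\,dt
\]
is continuous, that its integrand stays inside the admissible range $|{-F^+}+C|\le 1$, and that $\Phi_k$ has a unique zero in $(0,1)$. Positivity of $f^+$ gives $\Phi_k(0)<0$ and $\Phi_k(1)>0$, while strict monotonicity in $C$ follows from differentiating through $E_k^{-1}$; the analogue handles $D_k$. Finally, to confirm that this explicit $\bar u_k$ is the \emph{global} minimizer of $(\mathscr{P}^{(k)})$, I would invoke the Gao--Strang canonical duality framework: setting $\xi=u_{k,x}^2$ and taking the Legendre conjugate of $\xi\mapsto {\rm e}^{k(\xi-1)/2}/k$ produces a pure complementary energy whose unique critical point, by strict convexity, realizes the primal global minimum.
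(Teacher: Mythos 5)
Your overall architecture (existence by convexity, a first integral of (6) on each of $[a,b]$ and $[c,d]$, fixing $C_k,D_k$ from the remaining boundary conditions by monotonicity, then certifying global optimality by duality) is close to the paper's, but the central quadrature step is wrong as written. From ${\rm e}^{k(u_{k,x}^2-1)/2}u_{k,x}=-F^+(x)+C_k=:\theta_k(x)$, squaring gives $\theta_k^2=u_{k,x}^2\,{\rm e}^{k(u_{k,x}^2-1)}$, which is \emph{not} $E_k(|u_{k,x}|)=u_{k,x}^2\bigl(1+\tfrac{2}{k}\ln|u_{k,x}|\bigr)$; the identity $E_k(|u_{k,x}|)=\theta_k^2$ is false in general. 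Moreover, even granting that relation, ``inverting and restoring the sign'' would yield $u_{k,x}={\rm sgn}(\theta_k)\,E_k^{-1}(\theta_k^2)$, which is \emph{not} the formula you then assert, so the proposal is internally inconsistent exactly at the step that is supposed to produce the theorem's expression. The quantity recovered by $E_k^{-1}$ is the transport density $\lambda_k:={\rm e}^{k(u_{k,x}^2-1)/2}$ (the paper's $\lambda_k=k\zeta_k$), not $|u_{k,x}|$: since $u_{k,x}=\theta_k/\lambda_k$ and $u_{k,x}^2=1+\tfrac{2}{k}\ln\lambda_k$, one gets the dual algebraic equation $\theta_k^2=\lambda_k^2\ln({\rm e}\lambda_k^{2/k})=E_k(\lambda_k)$; as $E_k$ is strictly increasing from $[{\rm e}^{-k/2},1]$ onto $[0,1]$, this gives $\lambda_k=E_k^{-1}(\theta_k^2)$ and hence $u_{k,x}=\theta_k/E_k^{-1}(\theta_k^2)$, which is the stated representation. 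The same repair is needed for your gradient bound: the correct argument is $\theta_k^2=E_k(\lambda_k)\le\lambda_k^2$, whence $|u_{k,x}|=|\theta_k|/\lambda_k\le1$, i.e.\ constraint (4), rather than ``$E_k(\rho)\le\rho^2$'' applied to $\rho=|u_{k,x}|$.

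With that step corrected, the rest of your plan does track the paper: it likewise integrates $\theta_{k,x}+f=0$ on each interval, determines $C_k$ (resp.\ $D_k$) as the unique zero of a strictly increasing (resp.\ decreasing) function of the constant coming from $\bar u_k(b)=0$ (resp.\ $\bar u_k(d)=0$) --- your $\Phi_k$ is the paper's $M_k$, and its monotonicity is cleanest to see because $s\mapsto s/E_k^{-1}(s^2)$ is the inverse of the strictly increasing map $\rho\mapsto\rho\,{\rm e}^{k(\rho^2-1)/2}$ --- and it then certifies that $\bar u_k$ is a global minimizer and $\bar\zeta_k$ a global maximizer via the canonical dual problem $(\mathscr{P}_d^{(k)})$ and the signs of the second variations (15)--(16), giving the zero-duality-gap identity (14); your alternative appeal to convexity of $H^{(k)}$ would also serve for global minimality. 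But the explicit formula is the heart of Theorem 1.1, and as proposed that derivation fails.
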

By Rellich-Kondrachov Compactness Theorem, we have the following
convergence result.
\begin{thm}
For any positive density functions $f^+\in C(\overline{\Omega})$ and
$f^-\in C(\overline{\Omega^*})$ satisfying the normalized balance
condition, there exists a global minimizer subject to (2)-(4) for
the Kantorovich problem ($\mathscr{P}$).
\end{thm}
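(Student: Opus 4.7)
The plan is to realize a Kantorovich potential for $(\mathscr{P})$ as the uniform limit of a subsequence of the minimizers $\{\bar u_k\}_k$ produced in Theorem 1.1, combining compactness with the fact that the penalty $H^{(k)}$ degenerates to $0$ precisely on the admissible gradient regime $|u_x|\leq 1$.

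First I would establish a uniform $W^{1,\infty}_0(U)$-bound on $\{\bar u_k\}_k$. Each $\bar u_k$ satisfies (2)--(4); in particular $\|\bar u_{k,x}\|_{L^\infty(U)}\leq 1$, which can be read off the explicit formula: setting $z_k=E_k^{-1}((-F^++C_k)^2)$ on $[a,b]$ (and similarly on $[c,d]$), the definition of $E_k$ yields the canonical-duality identity $\bar u_{k,x}^2=1+(2/k)\ln z_k$, and since $F^\pm,C_k,D_k\in[0,1]$ one has $z_k\in(0,1]$, hence $\bar u_{k,x}^2\leq 1$. Together with the Dirichlet condition this gives a $k$-independent bound in $W^{1,\infty}_0$. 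The Rellich--Kondrachov compactness theorem (equivalently Arzelà--Ascoli in 1-D) extracts a subsequence, still denoted $\{\bar u_k\}$, with $\bar u_k\to\bar u$ uniformly on $\overline U$, and Banach--Alaoglu supplies $\bar u_{k,x}\stackrel{\ast}{\rightharpoonup}\bar u_x$ in $L^\infty(U)$. Uniform convergence transfers the Dirichlet condition (2) and the matching condition (3) to $\bar u$, while weak-$\ast$ lower semicontinuity of the $L^\infty$-norm gives (4); thus $\bar u$ is admissible for $(\mathscr{P})$.

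To verify optimality, fix any competitor $v$ satisfying (2)--(4). Since $|v_x|\leq 1$ a.e., one has $H^{(k)}(v_x)=e^{k(v_x^2-1)/2}/k\leq 1/k$, hence $I^{(k)}[v]\leq |U|/k-\int_U vf\,dx$. Using $H^{(k)}\geq 0$ and the global minimality of $\bar u_k$,
\[
-\int_U \bar u_k f\,dx\;\leq\;I^{(k)}[\bar u_k]\;\leq\;I^{(k)}[v]\;\leq\;\frac{|U|}{k}-\int_U v f\,dx.
\]
Letting $k\to\infty$ with $\bar u_k\to\bar u$ uniformly yields $K[\bar u]\geq K[v]$, and as $v$ was arbitrary, $\bar u$ is a global maximizer for $(\mathscr{P})$.

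The main obstacle is not the soft limit passage but securing the pointwise Lipschitz bound $\|\bar u_{k,x}\|_{L^\infty(U)}\leq 1$ uniformly in $k$. For the disjoint case this reduces to an elementary monotonicity analysis of $E_k$ on $(e^{-k/2},1]$ as sketched above. In the general case (when $\Omega\cap\Omega^*$ may be nontrivial and no explicit formula is available), one has to argue from the Euler--Lagrange equation (6): positivity and integrability of the transport density $e^{k(\bar u_{k,x}^2-1)/2}$ against the bounded source $f$, combined with the one-dimensional structure, precludes $|\bar u_{k,x}|>1$ on a set of positive measure. Once this is in place, every other step above is routine.
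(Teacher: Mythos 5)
Your proposal is correct, and its compactness half coincides exactly with the paper's own argument: the paper proves Theorem 1.2 by invoking the uniform bounds $\sup_k|\bar u_k|\le\operatorname{diam}(U)$ and $\sup_k|\bar u_{k,x}|\le 1$ (the latter is already part of the admissibility (2)--(4) asserted in Theorem 1.1, so you may simply cite it rather than re-derive it from $E_k$, and your worry about the non-disjoint case is discharged the same way the paper handles it, namely by appeal to Theorem 1.1), then applies Rellich--Kondrachov plus weak-$\ast$ compactness to extract a limit $u$ satisfying (2)--(4), with (4) recovered by weak-$\ast$ lower semicontinuity of the $L^\infty$-norm. Where you genuinely go beyond the paper is the optimality step: the paper stops once the limit is shown to be admissible and ``reaches the conclusion'' without verifying that $u$ actually optimizes $K$, whereas you close this gap with the clean two-sided comparison $-\int_U\bar u_k f\,dx\le I^{(k)}[\bar u_k]\le I^{(k)}[v]\le |U|/k-\int_U vf\,dx$ for any admissible competitor $v$ (using $H^{(k)}\ge 0$, $H^{(k)}(v_x)\le 1/k$ when $|v_x|\le 1$, and the minimality of $\bar u_k$), which upon letting $k\to\infty$ along the uniformly convergent subsequence yields $K[u]\ge K[v]$. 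This extra argument is exactly the exploitation of the degeneracy of the penalty $H^{(k)}$ on the constraint set announced in the paper's introduction but never carried out in its proof, so your version is the more complete one; the only caveat is to make sure $v$ lies in the admissible class of $(\mathscr{P}^{(k)})$, which it does since it satisfies (2)--(4).
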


The rest of the paper is organized as follows. In Section 2, first,
we introduce some useful notations which will simplify our proof
considerably. Then, we apply the canonical dual transformation to
deduce a sequence of perfect dual problems ($\mathcal{P}^{(k)}_d$)
corresponding to $(\mathcal{P}^{(k)})$ and a pure complementary
energy principle. Next, we apply the canonical duality theory to
prove Theorem 1.3. In the final analysis, a global maximizer of the
primal Monge-Kantorovich problem will be given by the approximation
techniques in the proof of Theorem 1.4.
\section{Proof of the main results}
\subsection{Some useful notations}

\begin{itemize}
\item $\theta_k$ is given by
\[
\theta_k(x):={\rm e}^{k(w_{k,x}^2-1)/2}w_{k,x}.
\]
\item $\Phi^{(k)}$ is a nonlinear
geometric mapping defined as
\[
\Phi^{(k)}(w_k):=k(w_{k,x}^2-1)/2.
\]
For convenience's sake, denote $\xi_k:=\Phi^{(k)}(w_k).$ It is
evident that $\xi_k$ belongs to the function space $\mathscr{U}$
given by
\[
\mathscr{U}:= \Big\{\phi\in L^\infty(U)\Big| \phi\leq 0\Big\}.
\]
\item $\Psi^{(k)}$ is a canonical energy
defined as
\[
\Psi^{(k)}(\xi_k):={\rm e}^{\xi_k}/k,
\]
which is a convex function with respect to $\xi_k$. For simplicity,
denote $\zeta_k:=\Psi^{(k)}(\xi_k)$, which is the G\^{a}teaux
derivative of $\Psi^{(k)}$ with respect to $\xi_k$. Moreover,
$\zeta_k$ is invertible with respect to $\xi_k$ and belongs to the
function space $\mathscr{V}^{(k)}$,
\[\mathscr{V}^{(k)}:=\Big\{\phi\in
L^\infty(U)\Big| 0<\phi\leq 1/k\Big\}.
\]
\item
$\Psi^{(k)}_\ast$ is defined as
\[
\Psi^{(k)}_\ast(\zeta_k):=\xi_k\zeta_k-\Psi^{(k)}(\xi_k)=\zeta_k(\ln(k\zeta_k)-1).
\]
\item $\lambda_k$ is defined as $\lambda_k:=k\zeta_k,$ and belongs
to the function space $\mathscr{V}$,
\[\mathscr{V}:=\Big\{\phi\in
L^\infty(U)\Big| 0<\phi\leq 1\Big\}.
\]
\end{itemize}
\subsection{Proof of Theorem 1.1}
Before we prove the main result, first and foremost, we give some
useful definitions.
\begin{defi}
By Legendre transformation, one defines a total complementary energy
functional $\Xi^{(k)}$,
\[
\Xi^{(k)}(u_k,\zeta_k):=\displaystyle\int_{U}\Big\{\Phi^{(k)}(u_k)\zeta_k-\Psi^{(k)}_\ast(\zeta_k)
-fu_k\Big\}dx.
\]
\end{defi}
Next we introduce an important {\it criticality criterium} for the
total complementary energy functional.
\begin{defi}
$(\bar{u}_k, \bar{\zeta}_k)$ is called a critical pair of
$\Xi^{(k)}$ if and only if \beq
D_{u_k}\Xi^{(k)}(\bar{u}_k,\bar{\zeta}_k)=0, \eneq and \beq
D_{\zeta_k}\Xi^{(k)}(\bar{u}_k,\bar{\zeta}_k)=0, \eneq where
$D_{u_k}, D_{\zeta_k}$ denote the partial G\^ateaux derivatives of
$\Xi^{(k)}$, respectively. \end{defi} In effect, by variational
calculus, we have the following observations from (7) and (8).
\begin{lem}
On the one hand, for any fixed $\zeta_k\in\mathscr{V}^{(k)}$, $(7)$
is equivalent to the equilibrium equation
\[
\begin{array}{ll}\displaystyle(k\zeta_k \bar{u}_{k,x})_x+f=0,& \
\text{\rm in}\
U\setminus\{\overline{\Omega\cap\Omega^*}\}.\end{array}
\]
On the other hand, for any fixed $u_k$ subject to (2)-(4), (8) is
consistent with the constructive law
\[
\Phi^{(k)}(u_k)=D_{\zeta_k}\Psi^{(k)}_\ast(\bar{\zeta}_k).
\]
\end{lem}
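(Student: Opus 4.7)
The plan is to derive both assertions as direct consequences of the fundamental lemma of the calculus of variations, by computing the two partial G\^{a}teaux derivatives of $\Xi^{(k)}$ explicitly and then reading off each identity pointwise.

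For the first assertion, I would fix $\zeta_k\in\mathscr{V}^{(k)}$ and perturb $\bar{u}_k$ by $\varepsilon\phi$, where $\phi\in C_c^\infty(U\setminus\overline{\Omega\cap\Omega^*})$, so that the Dirichlet boundary condition (2)-(3) is preserved. Since $\Phi^{(k)}(u_k)=k(u_{k,x}^2-1)/2$ is smooth in $u_{k,x}$, differentiating under the integral at $\varepsilon=0$ gives
$$D_{u_k}\Xi^{(k)}(\bar{u}_k,\bar{\zeta}_k)[\phi]=\int_U\bigl\{k\bar{u}_{k,x}\bar{\zeta}_k\phi_x-f\phi\bigr\}dx.$$
Integrating by parts in $x$ and exploiting the vanishing of $\phi$ on $\partial U\cup\overline{\Omega\cap\Omega^*}$ rewrites this as $-\int_U\bigl[(k\bar{\zeta}_k\bar{u}_{k,x})_x+f\bigr]\phi\,dx$, and the fundamental lemma forces $(k\bar{\zeta}_k\bar{u}_{k,x})_x+f=0$ on $U\setminus\overline{\Omega\cap\Omega^*}$. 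The converse direction follows by simply reversing the integration by parts.

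For the second assertion, I would fix $\bar{u}_k$ subject to (2)-(4) and perturb $\bar{\zeta}_k$ by $\varepsilon\psi$ with $\psi\in L^\infty(U)$. Because $\Psi^{(k)}_\ast(\zeta_k)=\zeta_k(\ln(k\zeta_k)-1)$ is smooth on $\mathscr{V}^{(k)}$ and $\Phi^{(k)}(\bar{u}_k)$ does not depend on $\zeta_k$, differentiating under the integral yields
$$D_{\zeta_k}\Xi^{(k)}(\bar{u}_k,\bar{\zeta}_k)[\psi]=\int_U\bigl[\Phi^{(k)}(\bar{u}_k)-D_{\zeta_k}\Psi^{(k)}_\ast(\bar{\zeta}_k)\bigr]\psi\,dx,$$
and the arbitrariness of $\psi$ produces the pointwise constructive law $\Phi^{(k)}(\bar{u}_k)=D_{\zeta_k}\Psi^{(k)}_\ast(\bar{\zeta}_k)$ almost everywhere.

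The only subtlety I anticipate concerns admissibility of the test perturbations rather than the calculus itself: in the first step $\phi$ must vanish both on $\partial U$ and on $\overline{\Omega\cap\Omega^*}$, which is precisely what localises the resulting Euler--Lagrange equation to $U\setminus\overline{\Omega\cap\Omega^*}$; in the second step one must verify that $\bar{\zeta}_k$ lies in the interior of $\mathscr{V}^{(k)}$, so that $\bar{\zeta}_k+\varepsilon\psi$ still satisfies $0<\zeta_k\leq 1/k$ for every bounded $\psi$ and every sufficiently small $\varepsilon>0$. Once this interior property is in hand, both identities follow at once.
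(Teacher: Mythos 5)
Your computation is correct and is exactly the "variational calculus" argument the paper has in mind: the paper states this lemma without a detailed proof, and your explicit Gâteaux-derivative calculation, integration by parts against test functions vanishing on $\partial U\cup\overline{\Omega\cap\Omega^*}$, and appeal to the fundamental lemma is the standard route to both identities. Your closing caveat about interior admissibility of the perturbations of $\bar{\zeta}_k$ is a reasonable refinement that the paper itself does not address.
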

Lemma 2.3 indicates that $\bar{u}_k$ from the critical pair
$(\bar{u}_k,\bar{\zeta}_k)$ solves the Euler-Lagrange equation (7).
\begin{defi}
From Definition 2.1, one defines a pure complementary energy
$I^{(k)}_d$ in the form
\[
I^{(k)}_d[\zeta_k]:=\Xi^{(k)}(\bar{u}_k,\zeta_k),
\]
where $\bar{u}_k$ solves the Euler-Lagrange equation (6).
\end{defi}
For convenience's sake, we show another representation of the pure
energy $I^{(k)}_d$.
\begin{lem} The
pure complementary energy functional $I^{(k)}_d$ can be rewritten as
\[
I^{(k)}_d[\zeta_k]=-1/2\int_{U}\Big\{{|\theta_k|^2/(k\zeta_k)}+k\zeta_k+2\zeta_k(\ln(k\zeta_k)-1)\Big\}dx,
\]
where $\theta_k$ satisfies \beq \theta_{k,x}+f=0\ \ \text{\rm in}\
U\setminus\{\overline{\Omega\cap\Omega^*}\}, \eneq equipped with a
hidden boundary condition.
\end{lem}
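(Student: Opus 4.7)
The plan is to start from Definition 2.4, namely
\[
I^{(k)}_d[\zeta_k]=\int_{U}\Big\{\Phi^{(k)}(\bar u_k)\zeta_k-\Psi^{(k)}_\ast(\zeta_k)-f\bar u_k\Big\}dx,
\]
substitute the explicit expressions $\Phi^{(k)}(\bar u_k)=k(\bar u_{k,x}^2-1)/2$ and $\Psi^{(k)}_\ast(\zeta_k)=\zeta_k(\ln(k\zeta_k)-1)$, and then eliminate $\bar u_k$ and $\bar u_{k,x}$ in favor of $\theta_k$ by using the equilibrium equation from Lemma 2.3.

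First I would record the pointwise identity
\[
\Phi^{(k)}(\bar u_k)\zeta_k-\Psi^{(k)}_\ast(\zeta_k)=\tfrac{1}{2}k\zeta_k\bar u_{k,x}^{\,2}-\tfrac{1}{2}k\zeta_k-\zeta_k(\ln(k\zeta_k)-1).
\]
Next, by Lemma 2.3 applied to the critical pair, the critical $\bar u_k$ paired with $\zeta_k$ satisfies $(k\zeta_k\bar u_{k,x})_x+f=0$ on $U\setminus\overline{\Omega\cap\Omega^*}$, which motivates setting $\theta_k:=k\zeta_k\bar u_{k,x}$. This is exactly the $\theta_k$ of the notations, since at the critical pair the constitutive law forces $k\zeta_k=\mathrm{e}^{k(\bar u_{k,x}^2-1)/2}$; moreover it automatically verifies (9). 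Solving for $\bar u_{k,x}$ gives $\bar u_{k,x}=\theta_k/(k\zeta_k)$, hence
\[
\tfrac{1}{2}k\zeta_k\bar u_{k,x}^{\,2}=\frac{|\theta_k|^2}{2k\zeta_k}.
\]

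The last task is to rewrite $\int_U f\bar u_k\,dx$ in terms of $\theta_k$ and $\zeta_k$. Using $f=-\theta_{k,x}$ and integrating by parts on each connected component of $U\setminus\overline{\Omega\cap\Omega^*}$, the boundary terms vanish because $\bar u_k\in W_0^{1,\infty}(U)$ satisfies (3)–(4); this is the "hidden boundary condition" referred to in the statement. Consequently
\[
-\int_U f\bar u_k\,dx=\int_U \theta_{k,x}\bar u_k\,dx=-\int_U \theta_k\bar u_{k,x}\,dx=-\int_U \frac{|\theta_k|^2}{k\zeta_k}\,dx.
\]
Combining this with the algebraic identity above and collecting the factor $-1/2$ yields the claimed representation of $I^{(k)}_d$.

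The computation is essentially mechanical once the correct identifications are made, so the only delicate point is justifying the integration by parts. Specifically, one must confirm that the hidden Dirichlet data inherited by $\bar u_k$ on $\partial U$ and on $\partial(\Omega\cap\Omega^*)$ (guaranteed by constraints (2)–(3)) is compatible with $\theta_k\in L^\infty$, so that boundary contributions genuinely drop out on each component. Assuming the regularity implicit in the preceding framework, this is immediate; otherwise one would approximate $\bar u_k$ by smooth compactly supported functions on each component and pass to the limit. This is the one step I would write out carefully; everything else is direct substitution.
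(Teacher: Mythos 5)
Your computation is correct and follows essentially the same route as the paper: start from $I^{(k)}_d[\zeta_k]=\Xi^{(k)}(\bar u_k,\zeta_k)$, substitute the explicit forms of $\Phi^{(k)}$ and $\Psi^{(k)}_\ast$, use the equilibrium/Euler--Lagrange relation to identify $\theta_k=k\zeta_k\bar u_{k,x}$ with $\theta_{k,x}+f=0$, and integrate by parts with the Dirichlet data killing the boundary terms. The paper merely organizes the same steps differently, splitting the integrand so that the term $\int_U\{(k\zeta_k\bar u_{k,x})_x+f\}\bar u_k\,dx$ vanishes by the Euler--Lagrange equation, so no substantive difference remains.
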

\begin{proof}
Through integrating by parts, one has
\[
\begin{array}{lll}
I^{(k)}_d[\zeta_k]&=&\displaystyle-\underbrace{\int_U\Big\{(k\zeta_k\bar{u}_{k,x})_x+f\Big\}\bar{u}_kdx}_{(I)}\\
\\
&&-\underbrace{1/2\int_U\Big\{k\zeta_k\bar{u}_{k,x}^2+k\zeta_k+2\zeta_k(\ln(k\zeta_k)-1)\Big\}dx.}_{(II)}\\
\\
\end{array}
\]
Since $\bar{u}_k$ solves the Euler-Lagrange equation (6), then, the
first part $(I)$ disappears. Keeping in mind the definition of
$\theta_k$ and $\zeta_k$, one reaches the conclusion.
\end{proof}

With the above discussion, next we establish a sequence of dual
variational problems corresponding to the approximation problems
($\mathscr{P}^{(k)}$).
\begin{equation}
(\mathscr{P}_d^{(k)}):\displaystyle\max_{\zeta_k\in\mathscr{V}^{(k)}}\Big\{I^{(k)}_d[\zeta_k]=-1/2\int_{U}\{{\theta_k^2/(k\zeta_k)}+k\zeta_k+2\zeta_k(\ln(k\zeta_k)-1)\}dx\Big\}.
\end{equation}
In effect, by calculating the G\^{a}teaux derivative of $I_d^{(k)}$
with respect to $\zeta_k$, we have \begin{lem} The variation of
$I_d^{(k)}$ with respect to $\zeta_k$ leads to the Dual Algebraic
Equation(DAE), namely, \beq
\theta_k^2=k{\bar{\zeta}_k}^2(2\ln(k\bar{\zeta}_k)+k), \eneq where
$\bar{\zeta}_k$ is from the critical pair
$(\bar{u}_k,\bar{\zeta}_k)$.
\end{lem}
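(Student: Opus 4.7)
The plan is to obtain the DAE by directly computing the first G\^ateaux variation of $I_d^{(k)}[\zeta_k]$ in the dual variable $\zeta_k$ and invoking the criticality condition. A crucial preliminary observation is that, by Lemma 2.5, the function $\theta_k$ appearing under the integral is completely determined by the equilibrium ODE $\theta_{k,x} + f = 0$ together with the hidden boundary condition; in particular, $\theta_k$ does not depend on $\zeta_k$ and may be treated as a fixed datum throughout the variation.

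With this in place, I would write the integrand of $I_d^{(k)}$ as
\[
\ell(\zeta_k,x) := \frac{\theta_k^2}{k\zeta_k} + k\zeta_k + 2\zeta_k\bigl(\ln(k\zeta_k)-1\bigr)
\]
and differentiate each summand pointwise in $x$ with respect to $\zeta_k$. A short calculation gives $\partial_{\zeta_k}(\theta_k^2/(k\zeta_k)) = -\theta_k^2/(k\zeta_k^2)$, $\partial_{\zeta_k}(k\zeta_k) = k$, and, by the product rule together with the cancellation of the constant term, $\partial_{\zeta_k}\bigl(2\zeta_k(\ln(k\zeta_k)-1)\bigr) = 2\ln(k\zeta_k)$. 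Pairing the sum of these derivatives (with the overall factor $-1/2$) against an arbitrary admissible test direction and appealing to the fundamental lemma of the calculus of variations yields the pointwise Euler condition
\[
-\frac{\theta_k^2}{k\bar{\zeta}_k^{\,2}} + k + 2\ln(k\bar{\zeta}_k) = 0 \quad \text{a.e.\ on } U,
\]
and multiplying through by $k\bar{\zeta}_k^{\,2}$ and rearranging produces exactly the DAE $\theta_k^2 = k\bar{\zeta}_k^{\,2}\bigl(2\ln(k\bar{\zeta}_k)+k\bigr)$.

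The only subtlety I expect to address is that $\bar{\zeta}_k$ is constrained to the admissible cone $\mathscr{V}^{(k)} = \{\phi\in L^\infty(U) : 0<\phi\leq 1/k\}$, so in principle the stationarity condition could degenerate into a variational inequality with an inactive-set Lagrange multiplier. To rule this out I would exploit the identity $\zeta_k = {\rm e}^{\xi_k}/k$ with $\xi_k = k(w_{k,x}^2-1)/2 \leq 0$, which shows that the upper bound $k\zeta_k\leq 1$ is automatically respected and the strict positivity $\zeta_k>0$ holds everywhere; hence any critical point lies in the interior of $\mathscr{V}^{(k)}$ except possibly on the null set $\{w_{k,x}^2=1\}$, so that no boundary multiplier contributes and the pointwise equality (10) is genuinely achieved. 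Routine measurability and integrability checks, based on the $L^\infty$ bounds encoded in $\mathscr{V}^{(k)}$ and the boundedness of $\theta_k$ inherited from $f\in C(\overline{U})$, justify the exchange of differentiation and integration used above.
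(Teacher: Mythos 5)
Your computation is correct and follows exactly the route the paper intends: the paper states Lemma 2.6 as the outcome of computing the G\^ateaux derivative of $I_d^{(k)}$ in $\zeta_k$ (it omits the calculation), and your pointwise differentiation of the three terms, giving $-\theta_k^2/(k\bar{\zeta}_k^2)+k+2\ln(k\bar{\zeta}_k)=0$ and hence $\theta_k^2=k\bar{\zeta}_k^2(2\ln(k\bar{\zeta}_k)+k)$, is precisely that calculation. Your additional remarks on treating $\theta_k$ as a fixed datum determined by $\theta_{k,x}+f=0$ and on interiority of the critical point in $\mathscr{V}^{(k)}$ are sensible refinements of the same argument, not a different approach.
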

As a matter of fact, the identity (11) can be rewritten as \beq
\theta_k^2=E_k(\lambda_k)={\lambda}_k^2\ln({\rm
e}{\lambda}_k^{2/k}). \eneq It is easy to check, $E_k$ is strictly
increasing with respect to $\lambda\in[{\rm e}^{-k/2},1]$.\\

From the above discussion, one deduces that, once $\theta_k$ is
given, then the analytic solution of the Euler-Lagrange equation (6)
can be represented as \beq
\bar{u}_k(x)=\displaystyle\int^{x}_{x_0}\eta_k(t)dt, \eneq where
$x\in \overline{U}, x_0\in\partial U$, $\eta_k=\theta_k/\lambda_k$.
Next, we verify that $\bar{u}_k$ is subject to (2)-(4) and is
exactly a global minimizer for ($\mathscr{P}^{(k)}$) and
$\bar{\zeta}_k$ is a global maximizer over $\mathscr{V}^{(k)}$ for
($\mathscr{P}^{(k)}_d$).
\begin{lem}(Canonical duality theory)
For any positive density functions $f^+\in C(\overline{\Omega})$ and
$f^-\in C(\overline{\Omega^*})$ satisfying the normalized balance
condition, there exists a unique sequence of solutions
$\{\bar{u}_k\}_k$ subject to (2)-(4) for the Euler-Lagrange
equations (6) with Dirichlet boundary in the form of (13), which is
a unique sequence of global minimizers for the approximation
problems ($\mathscr{P}^{(k)}$). And the corresponding
$\{\bar{\zeta}_k\}_k$ is a unique sequence of global maximizers for
the dual problems ($\mathscr{P}_d^{(k)}$). Moreover, the following
duality identity holds, \beq
I^{(k)}[\bar{u}_k]=\displaystyle\min_{u_k}I^{(k)}[u_k]=\Xi^{(k)}(\bar{u}_k,\bar{\zeta}_k)=\displaystyle\max_{\zeta_k}I_d^{(k)}[\zeta_k]=I_d^{(k)}[\bar{\zeta}_k].
\eneq
\end{lem}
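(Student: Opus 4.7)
The plan is to unwind the canonical duality scheme in three stages: first solve the equilibrium equation (9) and the DAE (12) for the dual variables $(\theta_k,\bar\zeta_k)$, then recover $\bar u_k$ by integrating (13), and finally use convexity to promote the resulting critical pair to a min/max pair with matching values. Concretely, on each component of $U\setminus\overline{\Omega\cap\Omega^*}$ the linear ODE (9) determines $\theta_k$ up to one integration constant; in the model case $\Omega=(a,b)$, $\Omega^*=(c,d)$ with $\Omega\cap\Omega^*=\emptyset$ this produces $\theta_k=-F^++C_k$ on $[a,b]$ and $\theta_k=F^--D_k$ on $[c,d]$. Since $E_k$ is strictly increasing from $[{\rm e}^{-k/2},1]$ onto $[0,1]$, as soon as $|\theta_k|\le 1$ pointwise, the DAE inverts to $\bar\lambda_k=E_k^{-1}(\theta_k^2)\in[{\rm e}^{-k/2},1]$ and $\bar\zeta_k=\bar\lambda_k/k\in\mathscr V^{(k)}$, after which $\bar u_{k,x}=\theta_k/\bar\lambda_k$ and $\bar u_k$ is defined by (13).

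The main obstacle is pinning down the constants $C_k,D_k$ so that both the bound $|\theta_k|\le 1$ and the Dirichlet condition from (2)-(3) hold simultaneously. The balance condition $F^+(b)=1=F^-(d)$ forces $C_k,D_k\in[0,1]$ in order to keep $|\theta_k|\le 1$. The Dirichlet condition on $[a,b]$ reduces to the scalar equation
\[
\Lambda_k^+(C):=\int_a^b\frac{-F^+(t)+C}{E_k^{-1}\bigl((-F^+(t)+C)^2\bigr)}\,dt=0,
\]
and similarly for $D$ on $[c,d]$. The sign of the integrand shows $\Lambda_k^+(0)\le 0\le\Lambda_k^+(1)$, and differentiating under the integral together with the strict monotonicity of $E_k^{-1}$ yields $(\Lambda_k^+)'(C)>0$ on $(0,1)$, so the intermediate value theorem produces a unique $C_k\in(0,1)$; the same argument produces a unique $D_k\in(0,1)$. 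This is the delicate point because one must ensure the integrand remains integrable as $\theta_k$ approaches zero (here $E_k^{-1}(s^2)\sim {\rm e}^{-k/2}$ near $s=0$, so no singularity develops) and that the monotonicity argument is valid on the whole range $[0,1]$.

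Once $\bar u_k$ is constructed, the remaining constraints are automatic: (4) is equivalent to $\bar\lambda_k\le 1$, which is the range condition of $E_k^{-1}$; (3) is built into the piecewise construction; (2) then follows from the explicit integral representation and the uniform boundedness of $\theta_k/\bar\lambda_k$. To identify $\bar u_k$ as the unique global minimizer of $(\mathscr{P}^{(k)})$, I would invoke the strict convexity of $H^{(k)}$ — its second derivative is ${\rm e}^{k(P^2-1)/2}(1+kP^2)>0$ — which makes $I^{(k)}$ strictly convex on $W_0^{1,\infty}(U)$, so the Euler-Lagrange identification in Lemma 2.3 upgrades the critical point to the unique global minimum. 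For the dual, the second variation of $I_d^{(k)}$ at $\bar\zeta_k$ is a negative definite quadratic form, a consequence of the strict convexity of the canonical conjugate $\Psi^{(k)}_\ast(\zeta)=\zeta(\ln(k\zeta)-1)$ coupled with the quadratic penalty $\theta_k^2/(k\zeta_k)$, so $\bar\zeta_k$ is the unique global maximizer in $\mathscr V^{(k)}$. Finally, the duality identity (14) is read off by combining the Fenchel equality $\Psi^{(k)}(\bar\xi_k)+\Psi^{(k)}_\ast(\bar\zeta_k)=\bar\xi_k\bar\zeta_k$ at the critical pair with the computation in Lemma 2.5, in which the cross term vanishes by integration by parts against the Euler-Lagrange equation (6).
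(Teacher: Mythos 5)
Your proposal is correct and follows essentially the same route as the paper: solving (9) componentwise, fixing $C_k,D_k$ by the monotonicity of the boundary integral (your $\Lambda_k^+$ is the paper's $M_k$), inverting the DAE via $E_k^{-1}$, and establishing extremality through the convexity of $H^{(k)}$ and the sign of the dual second variation, which is exactly the paper's (15)--(16). Your explicit endpoint/IVT check for $C_k,D_k$ and the Fenchel-equality derivation of (14) merely spell out steps the paper asserts without detail.
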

\begin{rem}
Lemma 2.7 demonstrates that the maximization of the pure
complementary energy functional $I_d^{(k)}$ is perfectly dual to the
minimization of the potential energy functional $I^{(k)}$. In
effect, the identity (14) indicates there is no duality gap between
them.
\end{rem}
\begin{proof}
Without loss of generality, we consider the disjoint case
$\Omega=(a,b)$ and $\Omega^*=(c,d)$, $b<c$. We divide our proof into
three parts. In the first and second parts, we discuss the
uniqueness of $\theta_k$. Extremum conditions will be
illustrated in the third part.\\

{\it First Part:}\\

In $\Omega$, we have a general solution for the differential
equation (9) in the form of
\[
\theta_k(x)=-F^+(x)+C_k,\ \ x\in[a,b].
\]
Since $f^+>0$, then $F^+\in C[a,b]$ is a strictly increasing
function with respect to $x\in[a,b]$ and consequently is invertible.
From the identity (12), one sees that there exists a unique
continuous function $\lambda_k(x)\in[{\rm e}^{-k/2},1]$. By paying
attention to the Dirichlet boundary $\bar{u}_k(a)=0$, one has the
analytic solution $\bar{u}_k$ in the following form,
\[
\bar{u}_k(x)=\int^{x}_{a}\eta_k(x)dx,\ \ \ \ x\in[a,b].
\]
Since
\[
\displaystyle\lim_{x\to F^{-1}(C_k)}\eta_k(x)=0,
\]
thus, $\bar{u}_k\in C[a,b]$. Recall that
\[
\bar{u}_k(b)=\int^{F^{-1}(C_k)}_{a}\eta_k(x)dx+\int^{b}_{F^{-1}(C_k)}\eta_k(x)dx=0,
\]
and we can determine the constant $C_k\in(0,1)$ uniquely. Indeed,
let
\[
\mu_k(x,t):=(-F^+(x)+t)/\lambda_k(x,t)
\]
and
\[
M_{k}(t):=\int^{b}_{a}\mu_k(x,t)dx,
\]
where $\lambda_k(x,t)$ is from (12). It is evident that $\lambda_k$
depends on $C_k$. As a matter of fact, $M_k$ is strictly increasing
with respect to $t\in(0,1)$, which leads to
\[
C_k=M_{k}^{-1}(0).
\]

{\it Second Part:}\\

Applying the similar procedure, one sees that
\[
\theta_k(x)=F^-(x)-D_k, \ \ x\in[c,d],
\]
where the constant $D_k\in(0,1)$. Since $f^->0$, then $F^-\in
C[c,d]$ is a strictly increasing function with respect to
$x\in[c,d]$ and consequently is invertible. We can represent the
analytical solution $\bar{u}_k$ in the following form,
\[
\bar{u}_k(x)=\int^{x}_{c}\eta_k(x)dx,\ \ \ \ x\in[c,d],
\]
Since
\[
\displaystyle\lim_{x\to G^{-1}(D_k)}\eta_k(x)=0,
\]
thus, $\bar{u}_k\in C[c,d]$. Recall that
$$
\bar{u}_k(d)=0,$$ and we can determine the constant $D_k\in(0,1)$
uniquely. Indeed, let
\[
\rho_k(x,t):=(F^-(x)-t)/\lambda_k(x,t)
\]
and let
\[
N_k(t):=\int^{d}_{c}\rho_k(x,t)dx,
\]
where $\lambda_k(x,t)$ is from (12). As a matter of fact, $N_k$ is
strictly decreasing with respect to $t\in(0,1)$, which leads to
\[
D_k=N_{k}^{-1}(0).
\]
Furthermore, the other cases, such as $b=c$ and $b>c$ can also be
discussed similarly due to the fact that $\bar{u}_k=0$ on
$\overline{\Omega\cap\Omega^*}$. Therefore, $\theta_k$ is uniquely
determined
in $U$ and the analytic solution $\bar{u}_k\in C(\overline{U})$.\\

{\it Third Part:}\\

On the one hand, for any test function $\phi\in W^{1,\infty}_0$, the
second variational form $\delta_\phi^2I^{(k)}$ with respect to
$\phi$ is equal to\beq \int_U{\rm
e}^{k(\bar{u}_{k,x}^2-1)/2}\Big\{k(\bar{u}_{k,x}
\phi_x)^2+\phi_x^2\Big\}dx.\eneq On the other hand, for any test
function $\psi\in\mathscr{V}^{(k)}$, the second variational form
$\delta_\psi^2I_d^{(k)}$ with respect to $\psi$ is equal to
\beq-\int_U\Big\{\theta_k^2\psi^2/(k\zeta_k^3)+\psi^2/\zeta_k\Big\}dx.
\eneq From (15) and (16), one deduces immediately that
\[
\delta^2_\phi I^{(k)}(\bar{u}_k)\geq0,\ \
\delta_\psi^2J_d^{(k)}(\bar{\zeta}_k)\leq0.
\]
Together with the uniqueness of $\theta_k$ discussed in the first
and second parts, the proof is concluded.
\end{proof}
Consequently, we reach the conclusion of Theorem 1.1 by summarizing
the above discussion.
\subsection{Proof of Theorem 1.2}
Now we consider the convergence of the sequence $\{\bar{u}_k\}_k$ in
Theorem 1.1. According to Rellich-Kondrachov Compactness Theorem,
since
$$\displaystyle\sup_{k}|\bar{u}_k|\leq \text{\rm diam}(U)$$ and
$$\displaystyle\sup_{k}|\bar{u}_{k,x}|\leq 1,$$ then, there exists a
subsequence(without any confusion, we still denote as)
$\{\bar{u}_{k}\}_{k}$ and $u\in W_0^{1,\infty}(U)\cap
C(\overline{U})$ such that \beq\bar{u}_{k}\rightarrow u\
(k\to\infty)\ \text{in}\ L^\infty(U),\eneq \beq\bar{u}_{k,x}\
\overrightharpoon{*}\ u_{x}\ (k\to\infty)\ \text{weakly\ $\ast$\
in}\ L^\infty(U).\eneq From (18), one has
\[\|u_x\|_{L^\infty(U)}\leq\displaystyle\liminf_{k\to\infty}\|\bar{u}_{k,x}\|_{L^\infty(U)}\leq\sup_{k\to\infty}\|\bar{u}_{k,x}\|_{L^\infty(U)}\leq 1.\]
Consequently, one reaches the conclusion of Theorem 1.2 by
summarizing the above discussion.\\

{\bf Acknowledgment}: This project is partially supported by US Air
Force Office of Scientific Research (AFOSR FA9550-10-1-0487),
Natural Science Foundation of Jiangsu Province (BK 20130598),
National Natural Science Foundation of China (NSFC 71273048,
71473036, 11471072), the Scientific Research Foundation for the
Returned Overseas Chinese Scholars, Fundamental Research Funds for
the Central Universities on the Field Research of Commercialization
of Marriage between China and Vietnam (No. 2014B15214). This work is
also supported by Open Research Fund Program of Jiangsu Key
Laboratory of Engineering Mechanics, Southeast University
(LEM16B06). In particular, the authors also express their deep
gratitude to the referees for their careful reading and useful
remarks.

\end{document}